\documentclass[10pt]{article}
\usepackage{amssymb,amsmath,amsthm}
\usepackage{fullpage}
\usepackage{graphicx}
\usepackage{color}
\usepackage{lineno}
\newtheorem{theorem}{Theorem}
\newtheorem{lemma}[theorem]{Lemma}
\newtheorem{corollary}[theorem]{Corollary}
\newtheorem{definition}[theorem]{Definition}

\newtheorem{remark}[theorem]{Remark}

\title{Achromatic number, achromatic index and diachromatic number  of circulant graphs and digraphs 
\thanks{Research supported by CONACyT-M{\' e}xico under Projects 282280, 47510664 and PAPIIT-M{\' e}xico under Project IN107218 and IN106318.}}
\author{Gabriela Araujo-Pardo\footnotemark[2] \and Juan Jos{\' e} Montellano-Ballesteros\footnotemark[2] \and Mika Olsen \footnotemark[3] \and Christian Rubio-Montiel\footnotemark[4]}

\begin{document}
\maketitle

\def\thefootnote{\fnsymbol{footnote}}
\footnotetext[2]{Instituto de Matem{\' a}ticas, Universidad Nacional Aut{\' o}noma de M{\' e}xico, Mexico City, Mexico. {\tt [garaujo|juancho]@math.unam.mx}.}
\footnotetext[3]{Departamento de Matem{\' a}ticas Aplicadas y Sistemas, UAM-Cuajimalpa, Mexico City, Mexico. {\tt
olsen@correo.cua.uam.mx}.}
\footnotetext[4]{Divisi{\' o}n de Matem{\' a}ticas e Ingenier{\' i}a, FES Acatl{\' a}n, Universidad Nacional Aut{\'o}noma de M{\' e}xico, Naucalpan, Mexico. {\tt christian.rubio@apolo.acatlan.unam.mx}.}
%\linenumbers
\begin{abstract} 
In this paper, we determine the achromatic and diachromatic numbers of some circulant graphs and digraphs each one with two lengths and give bounds for other circulant graphs and digraphs with two lengths. 
In particular, for the achromatic number we state that $\alpha(C_{16q^2+20q+7}(1,2))=8q+5$, and for the diachromatic number we state that $dac(\overrightarrow{C}_{32q^2+24q+5}(1,2))=8q+3$. In general, we give the lower bounds $\alpha(C_{4q^2+aq+1}(1,a))\geq 4q+1$ and $dac(\overrightarrow{C}_{8q^2+2(a+4)q+a+3}(1,a))\geq 4q+3$ when $a$ is a non quadratic residue of $\mathbb{Z}_{4q+1}$ for graphs and $\mathbb{Z}_{4q+3}$ for digraphs, and the equality is attained, in both cases, for $a=3$. 
Finally,
% in the last section,
we determine the achromatic index for circulant graphs of $q^2+q+1$ vertices when the projective cyclic plane of odd order $q$ exists. 
\end{abstract}

%\textbf{Keywords.} Achromatic number, achromatic index, quadratic residues, projective planes.

%%%%%%%%%%%%%%%%%%%%%%%%%%%%%%%%%%%%%%%%%%%%%%%%%%%%%%%%%%%%%%%%%%%%%%%%%%%%%%%%%%%%%%%%

\section{Introduction}

A \emph{complete} $k$-vertex-coloring of a graph $G$ is a vertex-coloring of $G$ using $k$ colors such that for every pair of colors there is at least two incident vertices in $G$ colored with this pair of colors. The \emph{chromatic} $\chi(G)$ and \emph{achromatic} $\alpha(G)$ numbers of $G$ are the smallest and the largest number of colors in a complete proper $k$-vertex-coloring of $G$. Therefore
\begin{equation} \label{eq1}
\chi(G)\leq\alpha(G).
\end{equation}
The concept of the achromatic number has been intensely studied in graphs since it was introduced by Harary, Hedetniemi and Prins \cite{MR0272662} in 1967. The achromatic index $\alpha_1(G)$ is defined similarly to the achromatic number $\alpha(G)$ but with edges instead of vertices; for more references of results related to these parameter see for instance \cite{MR3774452, MR2778722, MR3249588,MR3837811,MR1475893, MR2450569,MR0441778, MR1846917}.

The chromatic number and achromatic number have been generalized for digraphs by several authors \cite{MR2998438, MR3202296}. In particular, the dichromatic number, defined by V. Neumann-Lara \cite{MR693366}, and the diachromatic number, defined by the authors \cite{MR3875016}, generalize the concepts  of chromatic and achromatic numbers, respectively. An \emph{acyclic} $k$-vertex-coloring of a digraph $D$ is a $k$-vertex coloring, such that $D$ has no monochromatic cycles and a \emph{complete} $k$-vertex-coloring of a digraph $D$ is a vertex coloring using $k$ colors such that for every ordered pair $(i,j)$ of different colors, there is at least one arc $(u,v)$ such that $u$ has color $i$ and $v$ has color $j$. 
The dichromatic number $dc(D)$ and diachromatic number $dac(D)$ of $D$ are the smallest and the largest number of colors in a complete acyclic $k$-vertex-coloring of $D$. Therefore:
\begin{equation} \label{eq1D}
dc(D)\leq dac(D).
\end{equation}

Given a set $J \subseteq \{1, \ldots, n-1\}$, 
%the \emph{circulant graph} $C_n(J)$ is defined as the graph with vertex set equal to $\mathbb{Z}_n$ and $E(C_n(J))=\{ij\colon |j-i|=s, s \in J\}$ and 
the  \emph{circulant digraph} $\overrightarrow{C}_n(J)$ is defined as the digraph with vertex set equal to $\mathbb{Z}_n$ and $A(\overrightarrow{C}_n(J))=\{ij\colon j-i\equiv s \mod n, s \in J\}$, we call $J$ the set of \emph{lengths} of $\overrightarrow{C}_n(J)$. Moreover, to obtain a  \emph{circulant graph} ${C}_n(J)$ we define the edges of ${C}_n(J)$, where in this case $J \subseteq \{1, \ldots, \lfloor\frac{n}{2}\rfloor\}$, as $E({C}_n(J))=\{ij\colon |j-i|\equiv s \mod n, s \in J\}$.

In \cite{MR3729832} it was obtained asymptotically results of the achromatic and harmonious numbers of circulant graphs. In \cite{MR0441778} it was determined that the achromatic number of the cycle $C_n$ is equal to  its achromatic index, that is, $\alpha(C_n)= \alpha_1(C_n) = \max \{k \colon k \lfloor\frac{k}{2}\rfloor \leq n \} - s(n)$, where $s(n)$ is the number of positive integer solutions to $n=2x^2+x+1$. On the other hand, in the collection of papers \cite{MR3249588,MR0384589,MR543176,MR1447777,MR989126,MR1152043,MR961664}  the achromatic index of $K_n$ was determined for $n\leq 14$, when $n=p^2+p+1$ for $p$ an odd prime power and $n=q^2+2q+1-a$ for $q$ a power of two and $a\in\{0,1,2\}$. For the no-proper version of these results see \cite{MR3774452,MR2778722,MR2080106,MR1811221}.

In this paper, we determine the achromatic and diachromatic numbers of some circulant graphs and digraphs each one with two lengths and we give bounds for other circulant graphs and digraphs with two lengths. 
In particular, for the achromatic number we state that $\alpha(C_{16q^2+20q+7}(1,2))=8q+5$, whenever $8q+5$ is a prime, and for the diachromatic number we state that $dac(\overrightarrow{C}_{32q^2+24q+5}(1,2))=8q+3$, whenever $8q+3$ is a prime. In general, we give the lower bounds $\alpha(C_{4q^2+aq+1}(1,a))\geq 4q+1$ and $dac(\overrightarrow{C}_{8q^2+2(a+4)q+a+3}(1,a))\geq 4q+3$ when $a$ is a non quadratic residue of $\mathbb{Z}_{4q+1}$ for graphs and $\mathbb{Z}_{4q+3}$ for digraphs, and the equality is attained, in both cases, for $a=3$. 
In the last section, we determine the achromatic index for circulant graphs of $q^2+q+1$ vertices when the projective cyclic plane of odd order $q$ exists.

%%%%%%%%%%%%%%%%%%%%%%%%%%%%%%%%%%%%%%%%%%%%%%%%%%%%%%%%%%%%%%%%%%%%%%%%%%%%%%%%%%%%%%%%

\section{Complete colorings on circulant graphs and digraphs}

In this section we consider circulant graphs and digraphs of order prime $p$. %odd  
We use simple upper bounds for the achromatic and diachromatic number and some properties of quadratic and non-quadratic residues.
    
An upper bound of the achromatic number of a graph $G$ with size $m$ is: 
\begin{equation} \label{eq2}
\alpha(G)\leq \left\lfloor \frac{1}{2}+\sqrt{\frac{1}{4}+2m}\right\rfloor.
\end{equation}

The authors \cite{MR3249588} determined the following upper bound of the diachromatic number of a digraph $D$ with size $m$:
\begin{equation} \label{eq3}
dac(G)\leq \left\lfloor \frac{1}{2}+\sqrt{\frac{1}{4}+m}\right\rfloor.
\end{equation}

Properties of quadratic residues, denoted by $QR$, and non quadratic residues, denoted by $NQR$, have been widely studied both cases when $p$ is an odd prime congruent with either 1 or 3 module 4. In the following remark, we list some results we use in order prove our results, see \cite{MR990017}.

\begin{remark} \label{Rem QR} 
Let $p$ be an odd prime and consider $\mathbb{Z}_p$. \\
If $p\equiv 1 \mod 4$, then
\begin{enumerate}%grafica
\item If $i\in QR$ ($i\in NQR$ resp.), then $-i\in QR$ ($-i\in NQR$ resp.).
\item Let $i\in QR$. If $j\in QR$ ($j\in NQR$ resp.) then $ij\in QR$ ($ij\in NQR$ resp.). 
\item $|QR|=\lfloor\frac{p-1}{4}\rfloor$.
\item  The integer $2\in NQR$ if and only if $p=8q+5$.
\end{enumerate}
If $p\equiv 3 \mod 4$, then
\begin{enumerate}%digrafica
\item If $i\in QR$ ($i\in NQR$ resp.), then $-i\in NQR$ ($-i\in QR$ resp.).
\item $|QR|=\lfloor\frac{p-1}{2}\rfloor$.
\item  The integer $2\in NQR$ if and only if $p=8q+3$.

\end{enumerate}
\end{remark} 
%Observe that if  $p$ is an odd prime such that   $p\equiv 1 \mod 4$, then by Remark \ref{Rem QR}, a circulant digraph $C_p(J)$ is symmetric, which can be represented as a graph.

\subsection {Achromatic number on circulant graphs}
Let $p$ be a an odd prime power such that $p=4q+1$ and let $QR$ be the set of quadratic residues and $NQR$ the set of non residues quadratics  in $\mathbb{Z}_{p}$. 

\begin{theorem}\label{CG}
Let $4q+1$ be an odd prime number and let $a\in NQR$. Then $\alpha(C_{4q^2+aq+1}(1,a))\geq 4q+1$.
\end{theorem}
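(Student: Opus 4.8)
The plan is to construct an explicit complete proper vertex-colouring of $C_n(1,a)$, with $n=4q^2+aq+1$, using exactly $4q+1$ colours; since $\alpha$ is the largest number of colours in such a colouring, the inequality follows at once. Throughout write $p=4q+1$ and identify the colour set with $\mathbb{Z}_p$. Two preliminary observations shape the construction. First, $C_n(1,a)$ is $4$-regular, so it has $2n=8q^2+2aq+2$ edges, while $\binom{4q+1}{2}=8q^2+2q$; hence a complete $(4q+1)$-colouring is not excluded by the edge count (nor by (\ref{eq2})), but the surplus $(a-1)q+1$ edges of each length is small, so the colouring must be built with almost no waste. Second, the length-$1$ edges alone cannot do the job: by the formula $\alpha(C_n)=\max\{k:k\lfloor k/2\rfloor\le n\}-s(n)$ quoted in the introduction, the length-$1$ cycle has achromatic number strictly less than $4q+1$, so the length-$a$ edges are indispensable, and one expects the length-$1$ edges and the length-$a$ edges to cover complementary halves of the $\binom{4q+1}{2}$ colour pairs.

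Next I would define the colouring $c\colon\mathbb{Z}_n\to\mathbb{Z}_p$ by an explicit arithmetic rule along the cyclic order $0,1,\dots,n-1$, built from the additive and multiplicative structure of $\mathbb{Z}_p$; because $n\not\equiv 0\pmod p$ the rule needs a bounded, controlled correction so that it closes up consistently on $\mathbb{Z}_n$. Two routine checks follow: $c$ is surjective (all $4q+1$ colours occur), and $c$ is proper. Properness is purely local: along a length-$1$ edge the two colours differ by the prescribed nonzero increment, and along a length-$a$ edge the colour difference is a sum of $a$ consecutive increments, which one checks to be nonzero modulo $p$ using that $p$ is prime and $a\not\equiv 0$ (as $a\in NQR$).

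The heart of the argument — and the step I expect to be the main obstacle — is completeness: every unordered pair $\{r,s\}$ of distinct colours must occur on a length-$1$ or a length-$a$ edge. Here the hypothesis $a\in NQR$ is essential. Since $p\equiv 1\pmod 4$ we have $-1\in QR$ by Remark~\ref{Rem QR}, so a nonzero colour difference $d$ and its negative lie in the same class; the nonzero differences therefore split into $q$ quadratic-residue classes and $q$ non-residue classes, and there are exactly $4q^2+q$ colour pairs of each type. I would show that the length-$1$ edges realise precisely the residue-type pairs and the length-$a$ edges realise precisely the non-residue-type pairs: by Remark~\ref{Rem QR}, multiplication by the non-residue $a$ maps $QR$ bijectively onto $NQR$, so passing from a length-$1$ step to a length-$a$ step converts a residue difference into a non-residue one, making the two realised families complementary and their union all of the colour pairs. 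The delicate point is that this must be an almost exact cover: there are $n=4q^2+aq+1$ edges of each length but only $4q^2+q$ pairs of each type, so the pattern of $c$ has to be arranged so that the $(a-1)q+1$ ``extra'' edges of each kind are precisely repetitions. I would finish with a counting argument showing that no residue-type pair is missed by the length-$1$ edges and no non-residue-type pair is missed by the length-$a$ edges, the arithmetic of $\mathbb{Z}_p$ and the explicit form of $c$ controlling the distribution of the hits; this last verification is where the bulk of the work lies.

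Finally, I would note that the bound is sharp for $a=3$, the matching upper bound $\alpha(C_{4q^2+3q+1}(1,3))\le 4q+1$ requiring a separate, finer argument than (\ref{eq2}); together these give the equality announced in the introduction.
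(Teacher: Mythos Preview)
Your overall strategy matches the paper's: colour the vertices with elements of $\mathbb{Z}_p$ ($p=4q+1$) so that length-$1$ edges realise the quadratic-residue colour differences and length-$a$ edges the non-residue ones, the hypothesis $a\in NQR$ ensuring via Remark~\ref{Rem QR} that multiplication by $a$ interchanges the two classes. But what you have written is a plan, not a proof: the colouring itself is never defined. Your phrasing (``the prescribed nonzero increment'', ``a sum of $a$ consecutive increments'') leaves it unclear whether the step along the cycle is constant or varying; if constant, the construction fails outright, since a single step $r$ makes every length-$1$ edge realise the same difference class $\pm r$, covering only one of the $q$ residue classes rather than all of them.

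The idea you are missing is a block decomposition with \emph{varying} step sizes, one per residue class. The paper fixes representatives $1=r_1,r_2,\dots,r_q$ of the $q$ sign-classes of quadratic residues and cuts the cycle $C_n$ into $q$ consecutive arcs $R_1,\dots,R_q$ of length $4q+a$ each, together with one extra vertex $R_{q+1}$. Within arc $R_i$ the colours advance by $r_i$ at every step; since $4q+a>p$, the arc runs through a full period of $\mathbb{Z}_p$ with enough overlap that its length-$1$ edges realise every pair $\{c,c+r_i\}$ and its length-$a$ edges every pair $\{c,c+a r_i\}$. As $i$ ranges over $1,\dots,q$ the differences $r_i$ exhaust $QR$ and the $a r_i$ exhaust $NQR$, which yields completeness; the starting colours of the arcs are chosen so that the transitions between consecutive arcs stay proper and the whole sequence closes up on $\mathbb{Z}_n$. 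This block structure is the entire substance of the argument --- the ``counting argument'' you defer becomes immediate once the construction is specified, but cannot be carried out without it.
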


\begin{proof}
To prove the theorem we  give a complete and proper coloring  of $C_{4q^2+aq+1}(1,a)$ with $4q+1$ colors, that is, $K_{4q+1}$ is a homeomorphic image of
the  digraph $C_{4q^2+aq+1}(1,a)$. % is the diachromatic number of D.the idea is to construct a circulant graph $C_{4q^2+aq+1}(1,a)$ using a complete graph of order $4q+1$, denoted by $K_{4q+1}$, attaining that each edge $\{i,j\}\in E(K_{4q+1})$ appears at least once (or each pair $\{i,j\}$ of elements on $\mathbb{Z}_{4q+1}$ appears at least once).   
%In this prove the labels of the vertices in the circulant graph are not relevant, in fact, all of them will be labeling with elements of $\mathbb{Z}_{4q+1}$ (some of them will have the same label), the purpose is give a circulant graph of order $4q^2+aq+1$ with edges between vertices at distance $1$ and $a$.  
Let $K_{4q+1}$ be the complete graph and consider the quadratic residue $QR_{4q+1}=\{1,r_2,\dots,r_q\}$ in $\mathbb{Z}_{4q+1}$. 
%By Lemma \ref{lem 2 residuo},  $2\notin QR_{8q+5}$. 

We define the following color sequences, where $2\le i\le q$:
\[
\begin{array}{lcl}
R_1 &=& (0,1,2,\dots,4q,0,1,\dots,a-3,a-2)   \\
R_2 &=& (a-1,a-1+r_2,a-1+2r_2,\dots,a-1+(4q)r_2,a-1,\\
&&a-1+r_2,a-1+2r_2,\dots,a-1+(a-2)r_2) \\
R_3 &=& (a-1+(a-1)r_2,a-1+(a-1)r_2+r_3,a-1+(a-1)r_2+2r_3,\dots,a-1+(a-1)r_2+(4q)r_3, \\ 
&& a-1+(a-1)r_2,a-1+(a-1)r_2+r_3\ldots,a-1+(a-1)r_2+(a-2)r_3) \\
R_i &=& (a-1+(a-1)r_2+(a-1)r_3\dots + (a-1)r_{i-1},\\
&&a-1+(a-1)r_2+(a-1)r_3\dots + (a-1)r_{i-1}+r_i,\\
&& a-1+(a-1)r_2+(a-1)r_3\dots + (a-1)r_{i-1}+2r_i,\ldots,\\
&& a-1+(a-1)r_2+(a-1)r_3\dots + (a-1)r_{i-1}+(4q)r_i,\\
&& a-1+(a-1)r_2+(a-1)r_3\dots + (a-1)r_{i-1},\\
&&a-1+(a-1)r_2+(a-1)r_3\dots + (a-1)r_{i-1}+r_i,\\
&& a-1+(a-1)r_2+(a-1)r_3\dots + (a-1)r_{i-1}+2r_i,\ldots,\\
&&a-1+(a-1)r_2+(a-1)r_3\dots + (a-1)r_{i-1}+(a-2)r_i)\\ 

R_q&=&(a-1+(a-1)r_2+(a-1)r_3+\dots+(a-1)r_{q-1},\\
&&a-1+(a-1)r_2+(a-1)r_3+\dots+(a-1)r_{q-1}+r_{q},\\
&& a-1+(a-1)r_2+(a-1)r_3+\dots+(a-1)r_{q-1}+2r_{q}, \dots\\
&&a-1+(a-1)r_2+(a-1)r_3+\dots+(a-1)r_{q-1}+(4q)r_{q}\\
&&a-1+(a-1)r_2+(a-1)r_3+\dots+(a-1)r_{q-1},\\
&&a-1+(a-1)r_2+(a-1)r_3+\dots+(a-1)r_{q-1}+r_{q},\\
&& a-1+(a-1)r_2+(a-1)r_3+\dots+(a-1)r_{q-1}+2r_{q}, \dots\\
&&a-1+(a-1)r_2+(a-1)r_3+\dots+(a-1)r_{q-1}+(a-2)r_{q})\\

R_{q+1} &=&(a-1+(a-1)r_2+(a-1)r_3+\dots+(a-1)r_{q})
\end{array}
\]
For $1\le i\le q$, we concatenate $R_i$ with $R_{i+1}$ and finally, we concatenate $R_{q+1} $ with $R_1$, obtaining a $4q+1$-coloring of the cycle $C_{4q^2+aq+1}$. We add the edges between vertices of distance $a$ on the cycle $C_{4q^2+aq+1}$ obtaining the graph $C_{4q^2+aq+1}(1,a)$. 
Observe that, in addition to the edges between vertices of distance $a$ in each $R_i$, 
%In each $R_i$ the adjacencies are between vertices at distance $1$ and $a$ in the sequence; for instances in $R_1$ we have the edges $(j,j+1)$ and $(j,j+a)$ for $j\in R_1$ the last edge $(j,j+1)$ is the edge $(a-3,a-2)$ and the last edge $(j,j+a)$ is the edge $(4q-1,a-2)$. 
for $1\le i\le q$, between $R_i$ and $R_{i+1}$, we 
%concatenate $R_i$ with $R_{i+1}$ 
have  the edge between the $ (4q+1)$-th  vertex of $R_i$ and the first vertex of $R_{i+1}$,
\[\{a-1+(a-1)r_2+(a-1)r_3+\dots + (a-1)r_{i-1}+(4q)r_i, a-1+(a-1)r_2+(a-1)r_3+\dots + (a-1)r_{i}\}.\]
For instance, between $R_1$ and $R_2$ we add the edges $(a-2,a-1)$, $(4q,a-1)$.
Finally, between $R_{q+1}$ and $R_1$, we have the edge between the unique vertex of $R_{q+1}$ and the $a$-th vertex of $R_1$,
\[\{a-1+(a-1)r_2+(a-1)r_3+\dots+(a-1)r_{q}, a-1\}.\]

%Moreover, the next edges at distance one and $a$ in $C_{4a^2+aq+1}(1,a)$ are the edges $(a-1,a-2)$, $(4q,a-1)$, both of them between $R_1$ and $R_2$.

Summarizing, the concatenation $(R_{q+1}R_1\dots R_{q}R_{q+1})$ in cyclic order contains all the edges $\{l,k\}$ for all pair $\{l,k\}\in Z_{4q+1}$, such that  in $R_i$ are the edges $\{j,j+r_i\}$ and $\{j,j+ar_i\}$ with $r_i\in QR_{4q+1}$ and $ar_i\in NQR_{4q+1}$  less the edges $\{a-1+(a-1)r_2+\dots+(a-1)r_{i-1}+4qr_i,a-1+(a-1)r_2+\dots+(a-1)r_i\}$.

For $1\le i\le q$, each sequence $R_i$ has order $4q+a$ and $R_{q+1}$ has only one element, then the order of circulant graph is equal to $4q^2+aq+1$ and we can embed $C_{4q^2+aq+1}(1,a)$ in $K_{4q+1}$, and 
\[4q+1\leq \alpha(C_{4q^2+aq+1}(1,a)).\]
\end{proof}

By Remark \ref{Rem QR},  $2\in NQR$ if and only if $p=8q+5$, applying Equation \ref{eq2} for $a=2$, we have the following result:
\begin{corollary}\label{8q+5}
Let $8q+5$ be a prime number, then $\alpha(C_{16q^2+20q+7}(1,2))=8q+5$.
\end{corollary}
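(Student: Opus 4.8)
The plan is to prove the equality $\alpha(C_{16q^2+20q+7}(1,2))=8q+5$ by matching a lower bound coming from Theorem~\ref{CG} with an upper bound coming from the counting bound \eqref{eq2}.

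For the lower bound I would apply Theorem~\ref{CG} with the parameter playing the role of ``$q$'' taken to be $q':=2q+1$, so that the prime appearing there is $4q'+1=8q+5$, and with $a=2$. The hypotheses hold: $8q+5$ is prime by assumption, and since $8q+5\equiv 1\pmod 4$, item~4 of the first list of Remark~\ref{Rem QR} gives $2\in NQR$ in $\mathbb{Z}_{8q+5}$. A direct substitution gives $4q'^2+aq'+1=4(2q+1)^2+2(2q+1)+1=16q^2+20q+7$ and $4q'+1=8q+5$, so Theorem~\ref{CG} yields $\alpha(C_{16q^2+20q+7}(1,2))\geq 8q+5$.

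For the upper bound, observe that $C_{16q^2+20q+7}(1,2)$ is $4$-regular (each vertex is joined to the four vertices at cyclic distances $1$ and $2$), hence has size $m=2(16q^2+20q+7)=32q^2+40q+14$. Substituting into \eqref{eq2} gives $\alpha\leq\bigl\lfloor \tfrac12+\sqrt{\tfrac14+2m}\,\bigr\rfloor=\bigl\lfloor\tfrac12+\sqrt{64q^2+80q+\tfrac{113}{4}}\,\bigr\rfloor$. Since $(8q+5)^2=64q^2+80q+25<64q^2+80q+\tfrac{113}{4}$ while $(8q+\tfrac{11}{2})^2=64q^2+88q+\tfrac{121}{4}>64q^2+80q+\tfrac{113}{4}$ (the gap being $8q+2>0$), the square root lies strictly between $8q+5$ and $8q+\tfrac{11}{2}$, so the floor equals $8q+5$. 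Together the two bounds give the claimed equality.

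The only delicate points are the reindexing $q\mapsto 2q+1$ when invoking Theorem~\ref{CG} and the verification that the floor in \eqref{eq2} hits $8q+5$ rather than $8q+6$, i.e.\ that $64q^2+80q+\tfrac{113}{4}$ stays strictly below $(8q+\tfrac{11}{2})^2$; so there is no genuine obstacle once Theorem~\ref{CG} and the edge-counting bound are in hand.
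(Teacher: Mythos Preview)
Your proof is correct and follows exactly the paper's approach: the lower bound comes from Theorem~\ref{CG} (with the reindexing $q\mapsto 2q+1$ and $a=2$, using Remark~\ref{Rem QR} to ensure $2\in NQR$), and the upper bound comes from the edge-counting inequality~\eqref{eq2}. The paper states this as a one-line consequence, while you have correctly filled in the arithmetic verifying that the floor in~\eqref{eq2} equals $8q+5$.
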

Moreover, when $a=3$ we have also the following:

\begin{corollary}\label{dosytres}
Let $4q+1$ be a prime number. If $3\in NQR$, then $\alpha(C_{4q^2+aq+1}(1,3))= 4q+1$.
\end{corollary}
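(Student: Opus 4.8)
The plan is to sandwich $\alpha(C_{4q^2+3q+1}(1,3))$ between $4q+1$ and $4q+1$. The lower bound is immediate: since $4q+1$ is prime and $3\in NQR$, Theorem~\ref{CG} applied with $a=3$ yields $\alpha(C_{4q^2+3q+1}(1,3))\ge 4q+1$. Hence the real content is the matching upper bound $\alpha(C_{4q^2+3q+1}(1,3))\le 4q+1$.

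For the upper bound I would first record that $G:=C_{4q^2+3q+1}(1,3)$ is $4$-regular once $n:=4q^2+3q+1$ is large enough (the lengths $1$ and $3$ give each vertex four distinct neighbors), so $G$ has $m=2n=8q^2+6q+2$ edges. Substituting this value of $m$ into~\eqref{eq2} gives only $\alpha(G)\le 4q+2$, which is one unit too large; shaving off that unit is the whole point. To do so I would invoke the standard regular-graph sharpening of~\eqref{eq2}: in any complete proper $k$-coloring, a color class $V$ is an independent set whose vertices must, between them, be adjacent to each of the other $k-1$ colors, so $k-1\le\sum_{v\in V}\deg(v)=4|V|$, i.e. $|V|\ge\lceil(k-1)/4\rceil$.

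Then I would finish by contradiction. Assume $G$ admits a complete proper coloring with $k=4q+2$ colors. Every color class then has at least $\lceil(4q+1)/4\rceil=q+1$ vertices, so
\[
n=\sum_{i=1}^{4q+2}|V_i|\ge(4q+2)(q+1)=4q^2+6q+2>4q^2+3q+1=n,
\]
a contradiction. Therefore $\alpha(G)\le 4q+1$, and together with Theorem~\ref{CG} we obtain $\alpha(C_{4q^2+3q+1}(1,3))=4q+1$.

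The hard part is conceptual rather than computational: one must notice that the generic bound~\eqref{eq2} falls short by exactly one, and that the remedy is the ceiling in $|V|\ge\lceil(k-1)/4\rceil$, which is precisely what rules out $k=4q+2$; everything else is elementary arithmetic. It is also worth checking the few smallest values of $q$ directly, since there it is the lower-bound construction of Theorem~\ref{CG}, rather than the counting above, that requires care.
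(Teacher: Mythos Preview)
Your argument is correct. Both the lower bound via Theorem~\ref{CG} and the upper bound are valid; in particular the key inequality $|V_i|\ge\lceil(4q+1)/4\rceil=q+1$ together with $\sum_i|V_i|=n$ cleanly rules out $k=4q+2$.

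The paper's proof rests on the same observation but packages the contradiction differently. Instead of summing the sizes of the color classes, it passes to the quotient multigraph on the $4q+2$ colors: since each color class has at least $q+1$ vertices and $G$ is $4$-regular, every color-vertex has multigraph degree at least $4(q+1)=4q+4$, whereas in the underlying simple graph $K_{4q+2}$ each vertex has degree $4q+1$; the excess forces at least $3(4q+2)/2$ repeated color pairs, contradicting the fact that $|E(G)|-\binom{4q+2}{2}=1$. Your vertex-count is the same inequality divided through by $4$, and is the more transparent formulation; the paper's multiedge phrasing gives slightly more information (a lower bound on how many color pairs would have to repeat), but that extra information is not needed here.
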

\begin{proof}
If $a=3$, $C_{4q^2+3q+1}(1,3)$ has $8q^2+6q+2$ edges and $K_{4q+2}$ has $8q^2+6q+1$ edges, by Equation \ref{eq2}, it follows that $\alpha(C_{4q^2+3q+1}(1,3))\leq 4q+2$. Suppose that $\alpha(C_{4q^2+3q+1}(1,3))= 4q+2$. Observe that there is only one pair of colors repeated in the coloring of the graph $C_{4q^2+3q+1}(1,3)$. Consider a homomorphism $\varphi:V(C_{4q^2+3q+1}(1,3))\rightarrow V(K_{4q+2})$. Let us consider the multigraph induced by the homomorphism $\varphi$. Since $K_{4q+2}$ is $(4q+1)$-regular and $C_{4q^2+3q+1}(1,3)$ is $4$-regular, then each vertex of the multigraph has degree at least $4q+4$. Hence, there are at least $3(4q+2)/2$ set of colors repeated (or multiedges) a contradiction. Thus, $\alpha(C_{4q^2+3q+1}(1,3))= 4q+1$.
\end{proof}

 For instance, applying Corollary \ref{8q+5}, for $q=1$ we have that $8q+5=13$, and the quadratic residues in $\mathbb{Z}_{13}$ are $QR=\{1,3,4\}$, and by the Corollary \ref{8q+5}, $\alpha(C_{43}(1,2))=13$. In this case the coloring is defined by the following sequences:

$\begin{array}{lcl}
R_1  & =  & (0,1,2,3,4,5,6,7,8,9,10,11,12,0) \\
R_2  &  = & (1,4,7,10,0,3,6,9,12,2,5,8,11,1)\\
R_3  &  = & (4,8,12,3,7,11,2,6,10,1,5,9,0,4) \\
R_4  &  = & (8).\\
\end{array}$

\subsection{Diachromatic number on circulant digraphs}

Let $p=4q+3$ be a prime. In this case $C_p(J)$ is a circulant digraph for $J$,  recall that by Remark \ref{Rem QR} if $i\in QR$, then $-i\in NQR$. 

\begin{theorem}\label{4q+3}
Let $4q+3$ be a prime number and let $a\in NQR$. Then $dac(\overrightarrow{C}_{8q^2+2(a+4)q+a+3}(1,a))\geq 4q+3$.
Moreover, for $a=3$ the equality holds.
\end{theorem}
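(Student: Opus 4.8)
The plan is to parallel the proof of Theorem~\ref{CG}. For the lower bound I would exhibit an explicit complete acyclic $(4q+3)$-colouring of $\overrightarrow{C}_{n}(1,a)$, where $n=8q^{2}+2(a+4)q+a+3$; for the equality when $a=3$ I would then add a short degree-counting argument giving the matching upper bound.

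Concretely, take $\mathbb{Z}_{4q+3}$ as the colour set and let $QR=\{r_{1},r_{2},\dots ,r_{2q+1}\}$, with $r_{1}=1$, be its quadratic residues (there are $2q+1$ of them since $4q+3\equiv 3\pmod 4$, by Remark~\ref{Rem QR}). For $1\le i\le 2q+1$ let $R_{i}$ be the colour block of length $4q+a+2$ obtained by listing one full lap $s_{i},\,s_{i}+r_{i},\,\dots ,\,s_{i}+(4q+2)r_{i}$ through all the colours followed by the $a-1$ further entries $s_{i},\,s_{i}+r_{i},\,\dots ,\,s_{i}+(a-2)r_{i}$, where the starting colours obey $s_{1}=0$ and $s_{i+1}=s_{i}+(a-1)r_{i}$; let $R_{2q+2}$ be a single vertex coloured $s_{2q+1}+(a-1)r_{2q+1}$. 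Reading $R_{1}R_{2}\cdots R_{2q+1}R_{2q+2}$ cyclically colours the Hamiltonian dicycle $\overrightarrow{C}_{n}$, and we then add the arcs of length $a$; since $(2q+1)(4q+a+2)+1=8q^{2}+2(a+4)q+a+3$, the order is as claimed. To check completeness, inspect the two out-arc types: inside $R_{i}$ the length-$1$ arcs realise $(c,c+r_{i})$ for \emph{every} colour $c$, while the length-$a$ arcs realise $(c,c+ar_{i})$ for every colour $c$ except exactly one, namely the pair $(s_{i}-r_{i},\,s_{i}+(a-1)r_{i})$, and that exceptional pair is recovered by the length-$a$ arc straddling the boundary from $R_{i}$ to $R_{i+1}$ (with $R_{2q+2}$, then $R_{1}$, following $R_{2q+1}$ cyclically); this is precisely why the chaining $s_{i+1}=s_{i}+(a-1)r_{i}$ is forced. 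As $i$ ranges over $\{1,\dots ,2q+1\}$ the differences $r_{i}$ run through all of $QR$, and since $a\in NQR$ makes $x\mapsto ax$ a bijection $QR\to NQR$, the differences $ar_{i}$ run through all of $NQR$; hence every ordered pair of distinct colours occurs, so the colouring is complete. It is also acyclic: every arc interior to a block has colour-difference $r_{i}$ or $ar_{i}$ (both nonzero) and each length-$1$ arc between consecutive blocks has difference $r_{i}\ne 0$, while a brief boundary check shows that the few remaining arcs which could be monochromatic each end at a vertex whose two out-neighbours receive different colours, so no monochromatic dicycle can be formed (one can in fact take the colouring proper, exactly as in Theorem~\ref{CG}). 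This yields $dac(\overrightarrow{C}_{n}(1,a))\ge 4q+3$.

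For the equality when $a=3$ we have $n=8q^{2}+14q+6$, and every vertex of $\overrightarrow{C}_{n}(1,3)$ has out-degree $2$, so there are $2n$ arcs. In any complete acyclic $k$-colouring, contract each colour class to a single vertex: completeness forces a class to send an arc to each of the other $k-1$ classes, so a class with $s$ vertices satisfies $2s\ge k-1$, i.e., $s\ge\lceil(k-1)/2\rceil$; summing over the $k$ classes gives $n\ge k\lceil(k-1)/2\rceil$. For $k=4q+4$ this reads $8q^{2}+14q+6\ge(4q+4)(2q+2)=8q^{2}+16q+8$, which is false; hence $dac(\overrightarrow{C}_{n}(1,3))\le 4q+3$, and together with the lower bound we obtain equality. (Alternatively one first derives $dac\le 4q+4$ from Equation~\ref{eq3} and then excludes $4q+4$ by the same count.)

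I expect the genuine obstacle to lie in the boundary bookkeeping of the completeness step: identifying the single ordered pair that each block omits internally, verifying that the recursion $s_{i+1}=s_{i}+(a-1)r_{i}$ makes the straddling length-$a$ arcs supply precisely those omitted pairs, and---if one wants a proper colouring---ruling out monochromatic arcs at the block junctions and around the singleton $R_{2q+2}$. As already in Theorem~\ref{CG}, this is routine in spirit but notation-heavy.
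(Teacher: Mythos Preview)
Your lower-bound construction is exactly the paper's: the paper simply says ``proceed as in the proof of Theorem~\ref{CG}, considering the $2q+1$ quadratic residues and thus $2q+1$ sequences $R_1,\dots,R_{2q+1}$ each one of order $4q+a+2$ and $R_{2q+2}$ of order $1$,'' so you have reproduced the intended argument with rather more detail (the identification of the single omitted pair $(s_i-r_i,\,s_i+(a-1)r_i)$ and the recursion $s_{i+1}=s_i+(a-1)r_i$) than the paper itself supplies.

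For the equality at $a=3$ your route diverges from the paper's. The paper follows the template of Corollary~\ref{dosytres}: it notes that the complete digraph on $4q+4$ vertices has $(4q+4)(4q+3)=2n$ arcs, exactly the number of arcs in $\overrightarrow{C}_{8q^{2}+14q+6}(1,3)$, and then rules out a $(4q+4)$-colouring by a parity/regularity obstruction on the multidigraph obtained by contracting colour classes. Your argument bypasses the size coincidence entirely: out-degree $2$ forces every colour class to have at least $\lceil(k-1)/2\rceil$ vertices, so $n\ge k\lceil(k-1)/2\rceil$, and $k=4q+4$ already gives $8q^{2}+14q+6\ge(4q+4)(2q+2)=8q^{2}+16q+8$, a contradiction. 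This is correct and cleaner; the paper's argument, by contrast, exploits the exact arc count and would additionally show that in a hypothetical $(4q+4)$-colouring no ordered pair could be repeated and no arc could be internal to a class.
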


\begin{proof}
We proceed as in the proof of Theorem \ref{CG}, considering the $2q+1$ quadratic residues and thus $2q+1$ sequences $R_1,R_2,\dots,R_{2q+1}$ each one of order $4q+a+2$ and $R_{2q+2}$ of order $1$. 

For the inequalities, when $a=3$ proceed as in Corollary \ref{dosytres} considering the complete digraph $K_{4q+4}$ with $(4q+4)(4q+3)$ arcs and $(4q+3)$ (in/out)-regular. In this case the size of $K_{4q+4}$ equals the size of $\overrightarrow{C}_{8q^2+14q+6}(1,3)$ and the (in/out)-degree of each vertex of the multidigraph is congruent with 0 module 4. Since the complete digraph $K_{4q+4}$ is $(4q+3)$ (in/out)-regular, for any pair of colors $i$ and $j$ there are $4q+3$ arcs from $i$ to $j$, but that is impossible because $\overrightarrow{C}_{8q^2+14q+6}(1,3)$ has unique edges between many pair of colors. 
\end{proof}

Moreover, applying Remark \ref{Rem QR} to Theorem \ref{4q+3} and the Equation \ref{eq3}  we have also the following result:
\begin{corollary}\label{8q+3}
Let $8q+3$ be a prime number, then $dac(\overrightarrow{C}_{32q^2+24q+5}(1,2))=8q+3$.
\end{corollary}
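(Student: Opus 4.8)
The plan is to combine Theorem \ref{4q+3} with the upper bound \eqref{eq3} and the multiplicativity facts in Remark \ref{Rem QR}. First I would specialize the parameter: writing $p = 8q+3$ means $p = 4q'+3$ with $q' = 2q$, so the digraph in Theorem \ref{4q+3} with $a=2$ has order $8q'^2 + 2(a+4)q' + a + 3 = 8(2q)^2 + 12(2q) + 5 = 32q^2 + 24q + 5$, matching the statement. Next I would invoke Remark \ref{Rem QR}, item (3) of the $p\equiv 3 \bmod 4$ list: since $8q+3 \equiv 3 \bmod 8$, the integer $2$ is a non-quadratic residue in $\mathbb{Z}_{8q+3}$, so $a=2 \in NQR$ and Theorem \ref{4q+3} applies, yielding the lower bound $dac(\overrightarrow{C}_{32q^2+24q+5}(1,2)) \geq 4q'+3 = 8q+3$.

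For the matching upper bound I would count arcs: $\overrightarrow{C}_{32q^2+24q+5}(1,2)$ has $n$ vertices with $n = 32q^2+24q+5$ and two out-arcs per vertex (lengths $1$ and $2$), hence $m = 2n = 64q^2 + 48q + 10$ arcs. Plugging into \eqref{eq3}, I need $\lfloor \tfrac12 + \sqrt{\tfrac14 + m}\,\rfloor = 8q+3$; the cleanest way is to check that $(8q+3)(8q+2) \leq m$ while $(8q+4)(8q+3) > m$. The first reads $64q^2 + 40q + 6 \leq 64q^2 + 48q + 10$, true for all $q\geq 0$; the second reads $64q^2 + 56q + 12 > 64q^2 + 48q + 10$, i.e. $8q + 2 > 0$, also true. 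Hence $dac(\overrightarrow{C}_{32q^2+24q+5}(1,2)) \leq 8q+3$, and combining with the lower bound gives equality.

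The argument is essentially a routine substitution once Theorem \ref{4q+3} is in hand, so there is no serious obstacle; the only place to be careful is the arithmetic reconciling ``$q$'' in the corollary with ``$q$'' in Theorem \ref{4q+3} (they differ by a factor of two), and making sure the congruence class $8q+3 \equiv 3 \bmod 8$ is exactly the one for which Remark \ref{Rem QR} guarantees $2 \in NQR$. I would present these two checks explicitly and then cite Theorem \ref{4q+3} and \eqref{eq3} for the two inequalities.
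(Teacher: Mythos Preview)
Your proof is correct and follows exactly the approach indicated in the paper: invoke Remark~\ref{Rem QR} to get $2\in NQR$ when $p=8q+3$, apply Theorem~\ref{4q+3} with $a=2$ and $q'=2q$ for the lower bound, and use Equation~\eqref{eq3} with the arc count $m=2(32q^2+24q+5)$ for the matching upper bound. The only addition you make is spelling out the arithmetic for the substitution $q\mapsto 2q$ and for verifying $(8q+3)(8q+2)\le m <(8q+4)(8q+3)$, which the paper leaves implicit.
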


For instance, if $q=1$ then $8q+3=11$, the quadratic residues in $\mathbb{Z}_{11}$ are $QR=\{1,3,4,5,9\}$, and by Corollary \ref{8q+3}, $dac(\overrightarrow{C}_{61}(1,2))=11$. In this case the coloring is defined by the following sequences:

$\begin{array}{lcl}
R_1  & =  & (0,1,2,3,4,5,6,7,8,9,10,0) \\ %1
R_2  &  = & (1,4,7,10,2,5,8,0,3,6,9,1)\\ %3
R_3  &  = & (4,8,1,5,9,2,6,10,3,7,0,4) \\ %4
R_4  &  = & (8,2,7,1,6,0,5,10,4,9,3,8)\\ %5
R_5  &  = & (2,0,9,7,5,3,1,10,8,6,4,2)\\ %9
R_6  &  = & (0).\\
\end{array}$

%%%%%%%%%%%%%%%%%%%%%%%%%%%%%%%%%%%%%%%%%%%%%%%%%%%%%%%%%%%%%%%%%%%%%%%%%%%%%%%%%%%%%%%%

\section{Achromatic index of circulant graphs}

In this section, we endeavor to determine the achromatic index of some circulant graphs. In order to obtain upper bounds, we analyze the behavior of some functions. While to obtain lower bounds, we exhibit a proper and complete edge-coloring of $C_n(J)$ using some definitions and remarks about projective planes.

A \emph{projective plane} consists of a set of $n$ points, a set of lines, and an incidence relation between points and lines having the following properties:
\begin{itemize}
\item Given any two different points there is exactly one line incident to both of them.
\item Given any two different lines there is exactly one point incident to both of them.
\item There are four points, such that no line is incident to more than two of them.
\end{itemize}
For some number $q$, such a plane has $n=q^2+q+1$ points and $n$ lines; each line contains $q+1$ points and each point belongs to $q+1$ lines. The number $q$ is called the \emph{order} of the projective plane. If a projective plane of order $q$ exists then it is denoted by $\Pi_q$. When $q$ is a prime power, a projective plane of order $q$ does exist which is denoted by $PG(2,q)$ owing to the fact that it arising from the finite field of order $q$ and it is called \emph{the algebraic projective plane}.

Let $\mathbb{P}$ be the set of points of $\Pi_q$ and let $\mathbb{L}= \{L_0,\dots, L_{n-1}\}$ be the set of lines of $\Pi_q$. The complete graph $K_n$ can be seen to have the vertex set $\mathbb{P}$, and, for $i\in \{0,\ldots,n-1\}$, the line $L_i$ can be interpreted as the subgraph $K_{q+1}$ of $K_n$ induced by the points of $L_i$; we denote this graph by $l_i$. From the properties of $\Pi_q$ it follows that if $i$, $j \in \{0,\dots,n-1\}$, $i\not=j$, then $|V(l_i)\cap V(l_j)| =1$. Moreover, $\{E(l_0), \dots, E(l_{n-1})\}$ is a partition of $E(K_n)$. %The above view of the graph $K_n$ be referred to as a \emph{representation of the projective plane} $\Pi_q$.

Now, we recall the concept of difference sets of a $\mathbb{Z}_n$ group. A subset $D=\{d_0,d_1,\dots,d_q\}$ of $\mathbb{Z}_n$ is a \emph{difference set} if for each $g\in \mathbb{Z}_n$, $g\not = 0$, there is a unique pair of different elements $d_i,d_j\in D$ such that $g=d_i-d_j$. Therefore, if $D\subset \mathbb{Z}_n$ is a difference set and $i\in \mathbb{Z}_n$ is an arbitrary element, then both $-D:=\{-d_0,-d_1,\dots,-d_q\}$ and $D+i:=\{d_0+i,d_1+i,\dots,d_q+i\}$ are difference sets. See Table \ref{tab1} for some examples.

\begin{table}[!htbp]
\begin{center}
\begin{tabular}{|c|c|c|c|}
\hline
$\mathbb{Z}_n$ & Difference set & $\mathbb{Z}_n$ & Difference set\\
\hline
$\mathbb{Z}_{13}$ & 1,2,5,7 & $\mathbb{Z}_{91}$ & 1,2,4,10,28,50,57,62,78,82\\
\hline
$\mathbb{Z}_{31}$ & 1,2,4,9,13,19 & $\mathbb{Z}_{133}$ & 1,2,4,13,21,35,39,82,89,95,105,110 \\
\hline
$\mathbb{Z}_{57}$ & 1,2,4,14,33,37,44,53 & $\mathbb{Z}_{183}$ & 1,2,4,17,24,29,43,77,83,87,120,138,155,176\\
\hline
\end{tabular}
\caption{\label{tab1}Some known different sets for some $\mathbb{Z}_n$.}
\end{center}
\end{table}

Next, we recall the notion of cyclic planes using the polygon model. Let $q>1$ be an integer, and $n=q^2+q+1$. If the group $ \mathbb{Z}_n$ contains a difference set $D=\{d_0,d_1,\dots,d_q\}$ then there exists a projective plane called \emph{cyclic projective plane of order $q$}, defined as follows: the points are the elements of $\mathbb{Z}_n$ that is the set of integers $\{0,1,\dots,n-1\}$, and the lines are the sets $\{l_i\}_{i=0}^{n-1}$, where $l_i:=D+i$. Throughout the paper when we deal with elements of $\mathbb{Z}_n$ all sums are taken modulo $n$.

The class of cyclic projective planes is wider than the class of algebraic projective planes, but each known finite cyclic plane is isomorphic to $PG(2,q)$ for a suitable $q$. The following representation of the cyclic planes comes from K{\' a}rtesi \cite{MR0423175}, and it is useful to illustrate our proofs: Consider the numbering of the vertices of a regular $n$-gon with the elements of $\mathbb{Z}_n$ in clockwise order. Note that the subpolygon with $q+1$ vertices induced by a difference set $D$ has the property that all the chords obtained by joining pairs of their points have different lengths, and it represents the line $l_0$ of $\Pi_q$. Moreover the line $l_i$ is obtained by rotating $l_0$ around the center by angle $2\pi \frac{i}{n}$ for $i\in\{1,\dots,n-1\}$. See Figure \ref{Fig1}.

\begin{figure}[ht!]
\begin{center}
\includegraphics{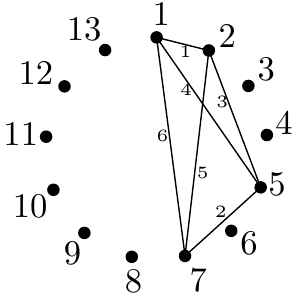}
\caption{\label{Fig1} The line $l_0$ of the cyclic model of $\Pi_{3}$ as a polygon of $13$ vertices, i.e., the line $l_0$ of the circulant graph $C_{13}(1,2,3,4,5,6)$.}
\end{center}
\end{figure}

Finally, we recall that any complete graph of even order $q+1$ admits a $1$-factorization with $q$ perfect matchings and we define the following definitions in order to simplify the proofs. % (see \cite{MR0411988,MR0256911}).

\begin{definition} 
Let $q+1$ be an even integer, $\{F_1,\dots,F_{t}\}$ a set of perfect matching pairwise edge-disjoint of $K_{q+1}$ for some $t\in\{1,\dots,q\}$ and $W=\bigcup\limits_{j=1}^{t}F_{j}$. An edge-coloring $\varsigma\colon E(W) \rightarrow \mathcal{C}$, for $\mathcal{C}=\{c^1,\dots,c^t\}$ a set of $t$ colors, will say to be of \emph{Type $M_t$} if for every $j\in \{1,2,\dots,t\}$ the set $\{ xy\in E(W) \colon \varsigma(xy) = c^j\}$ is $F_j$.

Since each color class is a matching, we will say that $W$ is an \emph{owner} of the set of colors $\mathcal{C}$.
\end{definition}

In order to prove our theorem, we show the following lemma.

\begin{lemma}\label{lema2} 
Let  $n= q^2 + q+ 1$ with $q$ a natural number, such that $ \Pi_q $ exists. Let $K_n$ be a representation of $\Pi_q$ and let $\varsigma\colon E(K_{n})\rightarrow \mathcal{C}$ be a partial edge-coloring of $K_{n}$. Suppose that each line $l_i$  of $K_n$ is an owner of the set of colors $\mathcal{C}_i\subseteq \mathcal{C}$. Then $\varsigma$ is complete.
\end{lemma}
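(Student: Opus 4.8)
The plan is to show that for every ordered (or unordered) pair of distinct colors $\{c, c'\} \subseteq \mathcal{C}$, there is an edge of $K_n$ colored with one of them and incident to an edge colored with the other --- in fact we will produce an edge colored $c$ and an edge colored $c'$ sharing a vertex. The key structural facts are those recalled just before the lemma: the edge sets $\{E(l_0), \dots, E(l_{n-1})\}$ partition $E(K_n)$, any two lines $l_i, l_j$ meet in exactly one vertex, and each $l_i \cong K_{q+1}$ carries a Type $M_{t_i}$ coloring realizing the color set $\mathcal{C}_i$ as a set of pairwise edge-disjoint perfect matchings of $l_i$. Two consequences are immediate and will drive the argument: first, since the $\mathcal{C}_i$ are realized inside disjoint edge sets and every color of $\mathcal{C}$ lies in some $\mathcal{C}_i$, the color $c$ appears on edges of at least one line $l_i$; second --- and this is the crux --- because the color class of $c$ inside $l_i$ is a \emph{perfect matching} of $V(l_i)$, every one of the $q+1$ vertices of $l_i$ is incident to an edge colored $c$ within $l_i$.

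First I would fix distinct colors $c, c'$ and pick lines $l_i \ni c$ (i.e.\ $c \in \mathcal{C}_i$) and $l_j \ni c'$ (i.e.\ $c' \in \mathcal{C}_j$); such lines exist since $\mathcal{C} = \bigcup_k \mathcal{C}_k$. If $i = j$ the conclusion is essentially trivial: inside the single $K_{q+1}$ the two perfect matchings $F$ (color $c$) and $F'$ (color $c'$) are both perfect, so any vertex of $l_i$ is incident to an edge of each, giving the required incident pair. If $i \neq j$, let $v$ be the unique common vertex of $l_i$ and $l_j$. Then $v$ is incident, inside $l_i$, to an edge colored $c$ (the $c$-matching of $l_i$ covers $v$), and $v$ is incident, inside $l_j$, to an edge colored $c'$ (the $c'$-matching of $l_j$ covers $v$). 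These two edges share the vertex $v$, so the pair $\{c,c'\}$ is witnessed. Since $c, c'$ were arbitrary, $\varsigma$ is complete. One should also note that $\varsigma$ restricted to the colored edges is proper automatically, since on each line it is a proper edge-coloring (distinct matchings) and different lines are edge-disjoint --- though completeness is all the lemma asserts.

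The only thing requiring a moment's care --- and what I expect to be the sole potential obstacle --- is making sure the ``owner'' hypothesis really does force every vertex of $l_i$ to see color $c$: this uses that $q+1$ is even so that $K_{q+1}$ admits a $1$-factorization into perfect matchings, and that ``Type $M_t$'' was defined precisely so that each color class is one of these perfect matchings $F_j$, hence spans all of $V(l_i)$. Given that, the intersection property of the projective plane supplies the shared vertex $v$ for free, and there is no case analysis beyond $i=j$ versus $i\neq j$. So the proof is short: name witnessing lines, invoke that each color's class is a perfect matching of its line, and use $|V(l_i) \cap V(l_j)| = 1$ to locate the incidence.
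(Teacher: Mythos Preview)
Your proof is correct and follows essentially the same approach as the paper: split into the cases $i=j$ and $i\neq j$, and in each case use that the color class is a perfect matching of its line together with the projective-plane intersection property to find a common vertex witnessing the pair. Your write-up is in fact more explicit about why ``owner'' forces every vertex of $l_i$ to see each color of $\mathcal{C}_i$.
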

\begin{proof} 
Let $\{c_1, c_2\}\subseteq \bigcup\limits_{i=1}^{n} \mathcal{C}_i$. If there is an $i\in \{1,\dots,n\}$, such that $\{c_1, c_2\}\subseteq \mathcal{C}_i$, since $l_i$ is an owner of $\mathcal{C}_i$ it follows that each $u\in V(l_i)$ is incident with edges colored $c_1$ and $c_2$. 

If $c_1\in \mathcal{C}_i$ and $c_2\in\ \mathcal{C}_j$ with $i\not=j$ there is $u\in V(G)$, such that $u= V(l_i)\cap V(l_j)$ and since $l_i$ and $l_j$ are owners of $\mathcal{C}_i$ and $\mathcal{C}_j$, respectively, by the properties of the projective plane, $u$ is incident with edges colored $c_1$ and $c_2$.
\end{proof}

\begin{theorem}\label{teo9}
Let $\Pi_q$ be a cyclic projective plane of order odd $q$ with difference set $D$, $n=q^2+q+1$ and $C_n(J)$ a circulant graph such that $J\subseteq \{1,\dots,\left\lfloor \frac{n}{2}\right\rfloor\}$. If the chords of $D$ have lengths of $J$ of $l_0$ and they are the union of $t$ matchings ($1\leq t\leq q$), then: \[tn\leq\alpha_1(C_n(J)).\]
\end{theorem}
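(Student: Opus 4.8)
The plan is to construct an explicit proper complete edge-coloring of $C_n(J)$ using $tn$ colors, and then invoke Lemma~\ref{lema2} to conclude completeness. First I would observe that, by hypothesis, the chords of the difference set $D$ span exactly the lengths in $J$ (measured in the polygon model), so the subgraph $l_0$ of $C_n(J)$ induced on the $q+1$ points of $D$ is precisely a copy of $K_{q+1}$ whose edge set decomposes into $t$ perfect matchings $F_1,\dots,F_t$. The crucial structural fact is that, because $C_n(J)$ is vertex-transitive under the rotation $x\mapsto x+1$ of $\mathbb{Z}_n$, each rotated line $l_i=D+i$ is again an induced $K_{q+1}$ in $C_n(J)$, and $\{E(l_0),\dots,E(l_{n-1})\}$ partitions $E(C_n(J))$ — this is exactly the cyclic projective plane structure, restricted to the lengths appearing in $J$.

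Next I would assign to each line $l_i$ its own private palette $\mathcal{C}_i=\{c^1_i,\dots,c^t_i\}$ of $t$ fresh colors, so that $\mathcal{C}_0,\dots,\mathcal{C}_{n-1}$ are pairwise disjoint and $|\bigcup_i \mathcal{C}_i|=tn$. On $l_i$ I would color the edges by a Type $M_t$ coloring transported from $l_0$: since $l_i$ is the rotation of $l_0$ by $i$, the matchings $F_1,\dots,F_t$ of $l_0$ rotate to matchings of $l_i$, and I color the $j$-th matching of $l_i$ with $c^j_i$. Because each $\mathcal{C}_i$ is used only on $l_i$ and every color class within $l_i$ is a matching of $K_{q+1}$, every color class in the whole graph is a matching, so the coloring is proper; and since every edge of $C_n(J)$ lies in exactly one $l_i$, all $tn$ colors are actually used and each $l_i$ is an owner of $\mathcal{C}_i$ in the sense of the definition. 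Then Lemma~\ref{lema2} applies verbatim and gives that the coloring is complete, whence $\alpha_1(C_n(J))\ge tn$.

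The main point requiring care — the step I expect to be the real obstacle — is verifying that the rotations of $l_0$ genuinely partition $E(C_n(J))$ rather than $E(K_n)$, i.e., that restricting attention to the lengths in $J$ is consistent: one must check that every chord of $D$ has a length lying in $J$ and, conversely, that the chords of $D$ realize each length of $J$ the right number of times so that the $n$ rotated copies of $l_0$ cover each $J$-edge of $C_n(J)$ exactly once. This follows from the difference-set property (each nonzero element of $\mathbb{Z}_n$ is a difference of a unique ordered pair from $D$, hence each length in $\{1,\dots,\lfloor n/2\rfloor\}$ that occurs among chords of $D$ occurs exactly once as an unordered length, and rotating gives each edge of that length once), combined with the hypothesis that those chord-lengths are exactly $J$. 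A secondary point is that $q$ odd guarantees $q+1$ is even, so $K_{q+1}$ admits a $1$-factorization and the decomposition of $E(l_0)$ into $t$ perfect matchings is possible; this is where the parity assumption on $q$ is used. Once these bookkeeping facts are in place, the construction and the appeal to Lemma~\ref{lema2} are routine.
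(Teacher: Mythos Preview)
Your approach is essentially identical to the paper's: partition a palette of $tn$ colors into blocks $\mathcal{C}_0,\dots,\mathcal{C}_{n-1}$ of size $t$, give each line $l_i$ a Type $M_t$ coloring with its private block, observe properness because color classes are matchings, and invoke Lemma~\ref{lema2} for completeness. One small slip to fix: the induced subgraph of $C_n(J)$ on $D$ is \emph{not} a full $K_{q+1}$ unless $J=\{1,\dots,\lfloor n/2\rfloor\}$; by hypothesis it is only the $t$-regular graph formed by those chords of $D$ whose lengths lie in $J$, and it is this subgraph (not all of $K_{q+1}$) that decomposes into the $t$ perfect matchings --- the examples with $J=\{1,2\}$, $t=1$ in $\mathbb{Z}_{13}$ make this clear, and once you adjust this the rest of your argument (including the edge-partition check via the difference-set property) goes through unchanged.
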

\begin{proof}
In order to prove this theorem, we exhibit a proper and complete edge-coloring of $C_{n}(J)$ for $n=q^2+q+1$, with $k:=tn$ colors. Since $J$ is a subset of $\{1,\dots,\left\lfloor \frac{n}{2}\right\rfloor\}$ and $\left\lfloor \frac{n}{2}\right\rfloor = \binom{q+1}{2}$, then $t=2|J|/(q+1)$. Let $\mathcal{C}$ be a set of $k$ colors, and let $\{\mathcal{C}_0, \dots, \mathcal{C}_{n-1}\}$ be a partition of $\mathcal{C}$ such that $\mathcal{C}_i$ is a set of $t$ colors for all $i\in\{0,\dots,n-1\}$. 

Let $V(C_n(J))$ be representing the points of a cyclic plane $\Pi_q$, and let $\{l_0,\dots,l_{n-1}\}$ be the set of lines of induced in $V(C_n(J))$ such that $V(l_0)=D$.

To color the edges of $V(C_n(J))$ with $\varsigma\colon V(C_n(J)) \rightarrow [k]$, we assign partial colorings of Type $M_i$ using the colors of $\mathcal{C}_i$ to the lines of $l_i$ for all $i\in\{0,\dots,n-1\}$.

Since each color class is a matching, the coloring is proper. Since each line $l_i$ is an owner of each color of $\mathcal{C}_i$ ($1\leq i\leq n$) by Lemma \ref{lema2} it follows that the resultant edge-coloring $ \varsigma:=\overset{n}{\underset{i=1}{\bigcup}}\varsigma_{i} $ of $V(C_n(J))$ is a complete edge-coloring using $k$ colors.
\end{proof}

Next, we analyze the upper bounds. Note that a circulant graph $G=C_n(J)$ is $r$-regular with $r=2|J|$, therefore, it has size $n\binom{r}{2}$ and by Equation \ref{eq2}, we have that: \[\alpha_1(G)\leq \left\lfloor \frac{1}{2}+\sqrt{\frac{1}{4}+nr(r-1)}\right\rfloor.\]

This equation works better with some graphs (normally with sparse graphs, i.e., those for which its size is much less than $|V|^2$) but for these graphs we give a different approach to obtain an upper bound, firstly introduced for complete graphs, see \cite{MR989126}. Let $f_{n,r}(x)$ and $g_{n,r}(x)$ be functions that counts the maximum number of chromatic classes in two different ways where $x$ is the size of the smallest chromatic class defined as follows.
\[f_{n,r}(x)= \frac{nr}{2x}\textrm{ and }g_{n,r}(x)=\begin{cases}
\begin{array}{c}
2x(r-1)+1\\
x(n-2x+r-1)+1
\end{array} & \begin{array}{c}
\textrm{ if }r< n-2x;\\
\textrm{ if }r\geq n-2x.
\end{array}\end{cases}\]

Since $f_{n,r}$ is a decreasing function and $g_{n,r}$ is increasing function, we get that: 
\[\alpha_1(G)\leq\mathrm{max}\left\{ \mathrm{min}\{\left\lfloor f_{n,r}(x)\right\rfloor,g_{n,r}(x) \colon x\in\mathbb{N}\}\right\},\]
and thus,
\[\alpha_1(G)\leq\mathrm{max}\left\{ \mathrm{min}\{f_{n,r}(x),g_{n,r}(x) \colon x\in\mathbb{R}\}\right\}.\]

\begin{theorem}\label{teo10}
Let $n=q^2+q+1$ be and $C_n(J)$ a circulant $r$-regular graph such that $r=(q+1)t$ with ($1\leq t\leq q$), and there exists $\Pi_q$ a cyclic projective plane of order odd $q$ such that $J\subseteq \{1,\dots,\left\lfloor \frac{n}{2}\right\rfloor\}$, the chords of $D$ have lengths of $J$ of $l_0$ and they are the union of $t$ matchings, then: \[\alpha_1(C_n(J))=tn.\]
\end{theorem}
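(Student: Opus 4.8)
The plan is to prove the two inequalities $tn\le\alpha_1(C_n(J))$ and $\alpha_1(C_n(J))\le tn$ separately. The lower bound is immediate, since the present hypotheses are exactly those of Theorem \ref{teo9}, which already gives $tn\le\alpha_1(C_n(J))$. So the real content is the matching upper bound, and here the projective plane plays no role: we only use that $C_n(J)$ is $(q+1)t$-regular on $n=q^2+q+1$ vertices with $q$ odd.

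For the upper bound I would invoke the estimate $\alpha_1(C_n(J))\le\max\{\min\{\lfloor f_{n,r}(x)\rfloor,g_{n,r}(x)\}\colon x\in\mathbb{N}\}$ stated above, with $r=(q+1)t$, and show that $\min\{f_{n,r}(x),g_{n,r}(x)\}\le tn$ for every positive integer $x$. The pivot is the integer $x_0:=(q+1)/2$, which is an integer precisely because $q$ is odd (this is where that hypothesis enters). Since $f_{n,r}(x_0)=\frac{nr}{2x_0}=\frac{n(q+1)t}{q+1}=tn$ and $f_{n,r}$ is decreasing, every integer $x\ge x_0$ satisfies $f_{n,r}(x)\le tn$, hence $\min\{f_{n,r}(x),g_{n,r}(x)\}\le tn$. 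It then only remains to bound $g_{n,r}$ on the integers $1\le x\le x_0-1=(q-1)/2$.

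For $1\le x\le(q-1)/2$ I would first decide which branch of $g_{n,r}$ is active. If $t\le q-1$ then $n-r=q^2+q+1-(q+1)t\ge q+2>q-1\ge 2x$, so $r<n-2x$ and $g_{n,r}(x)=2x((q+1)t-1)+1$; this is increasing in $x$, hence at most $(q-1)((q+1)t-1)+1=(q^2-1)t-q+2$, and a one-line check gives $(q^2-1)t-q+2\le(q^2+q+1)t=tn$ (the slack being $(q+2)t+q-2>0$). If instead $t=q$, then $n-r=1\le 2x$, so $g_{n,r}(x)=x(n-2x+r-1)+1=2x(q^2+q-x)+1$, which is increasing on $[1,(q-1)/2]$ and therefore at most $(q-1)(q^2+q)+1=q^3-q+1<q^3+q^2+q=tn$. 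In either case $\min\{f_{n,r}(x),g_{n,r}(x)\}\le g_{n,r}(x)\le tn$. Combining the two ranges of $x$ yields $\alpha_1(C_n(J))\le tn$, and together with Theorem \ref{teo9} we conclude $\alpha_1(C_n(J))=tn$.

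The obstacle I expect to be the delicate point is conceptual rather than computational: the continuous relaxation of the bound is too weak here, because for $t\ge 2$ the real maximum of $\min\{f_{n,r},g_{n,r}\}$ lies strictly above $tn$ (the crossover of $f_{n,r}$ and $g_{n,r}$ falls just below $x_0$). The argument therefore has to be run over integer $x$ and pinned exactly at $x_0=(q+1)/2$, which is the place where the oddness of $q$ is genuinely used; once that is noticed, everything else is the routine estimation sketched above.
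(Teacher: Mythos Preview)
Your proposal is correct and follows essentially the same approach as the paper: the lower bound comes from Theorem~\ref{teo9}, and the upper bound is obtained from the $f_{n,r}/g_{n,r}$ machinery by pivoting at the integer $x_0=(q+1)/2$ (using that $q$ is odd) and checking that $g_{n,r}(x)<tn$ for $x\le(q-1)/2$. The only cosmetic differences are that the paper dispatches the case $t=q$ by citing the known result for complete graphs rather than computing as you do, and it argues via the two values $x=(q\pm1)/2$ (using the monotonicity of $f$ and $g$) instead of explicitly treating all integers $x$.
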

\begin{proof}
Since the result is true for complete graphs, assume ($1\leq t\leq q-1$).  By Theorem \ref{teo9}, $tn\leq\alpha_1(C_n(J))$ and $\alpha_1(C_n(J))\leq f_{n,r}(x)= \frac{n(q+1)t}{2x}$ and then $x\leq\frac{q+1}{2}$.

Suppose $x=\frac{q+1}{2}$, then $f_{n,r}(x)=tn$ and $g_{n,r}(x)=((q+1)t-1)(q+1)+1=tn+tq-q=$ since $(q+1)\leq t(q+1)\leq q^2-1<q^2=n-2x$. Hence $\mathrm{min}\{\left\lfloor f_{n,r}(x)\right\rfloor,g_{n,r}(x) \}=f_{n,r}(x)=tn=q^2t+qt+t$.

 Now, suppose $x=\frac{q-1}{2}$, then $f_{n,r}(x)=tn\frac{q+1}{q-1}=\frac{q^3t+2q^2t+2qt+t}{q-1}$ and $g_{n,r}(x)=((q+1)t-1)(q-1)+1=\frac{q^3t-q^2t-qt+t-(q^2-3q+2)}{q-1}$ (newly, since $(q+1)\leq t(q+1)\leq q^2-1<q^2=n-2x$). Hence $\mathrm{min}\{\left\lfloor f_{n,r}(x)\right\rfloor,g_{n,r}(x) \}=g_{n,r}(x)=q^2t-t-q+2$ since $q\geq 3$.
 
Therefore, \[\alpha_1(C_n(J))\leq\mathrm{max}\left\{q^2t+qt+t,q^2t-t-q+2\}\right\}=q^2t+qt+t=tn,\]
i.e., \[\alpha_1(C_n(J))=tn.\]

\end{proof}

For example, $\alpha_1(C_n(1,2))=\alpha_1(C_n(3,6))=\alpha_1(C_n(4,5))=13$, $\alpha_1(C_n(3,4,5,6))=\alpha_1(C_n(1,2,4,5))=\alpha_1(C_n(1,2,3,6))=26$ and $\alpha_1(C_n(1,2,3,4,5,6))=39$.

%---------------------------- Bibliography -------------------------------

\bibliographystyle{plain}
\bibliography{biblio}

\end{document}